\documentclass{article}%
\usepackage{amsmath}
\usepackage{amsfonts}
\usepackage{amssymb}
\usepackage{graphicx}%
\setcounter{MaxMatrixCols}{30}
\providecommand{\U}[1]{\protect\rule{.1in}{.1in}}
\newtheorem{theorem}{Theorem}
\newtheorem{acknowledgement}[theorem]{Acknowledgement}

\newtheorem{corollary}[theorem]{Corollary}

\newtheorem{lemma}[theorem]{Lemma}

\newtheorem{remark}[theorem]{Remark}

\newenvironment{proof}[1][Proof]{\noindent\textbf{#1.} }{\ \rule{0.5em}{0.5em}}
\begin{document}

\title{Free Products of Generalized RFD C*-algebras}
\author{Don Hadwin}
\maketitle

\begin{abstract}
If $k$ is an infinite cardinal, we say a C*-algebra $\mathcal{A}$ is
residually less than $k$ dimensional, $R_{<k}D,$ if the family of
representations of $\mathcal{A}$ on Hilbert spaces of dimension less than $k$
separates the points of $\mathcal{A}.$ We give characterizations of this
property, and we show that if $\left\{  \mathcal{A}_{i}:i\in I\right\}  $ is a
family of $R_{<k}D$ algebras, then the free product $\underset{i\in I}{\ast
}\mathcal{A}_{i}$ is $R_{<k}D$. If each $\mathcal{A}_{i}$ is unital, we give
sufficient conditions, depending on the cardinal $k$, for the free product
$\underset{i\in I}{\ast_{\mathbb{C}}}\mathcal{A}_{i}$ in the category of
unital C*-algebras to be $R_{<k}D$. We also give a new characterization of
RFD, in terms of a lifting property, for separable C*-algebras.

\end{abstract}

\bigskip

\bigskip

\section{Introduction}

A C*-algebra $\mathcal{A}$ is \emph{residually finite dimensional }( $RFD$ )
if the collection of all finite-dimensional representations of $\mathcal{A}$
separate the points of $\mathcal{A}$; equivalently, if there is a direct sum
of finite-dimensional representations of $\mathcal{A}$ with zero kernel. It is
clear that every commutative C*-algebra is RFD. Man-Duen Choi \cite{C} showed
that free group C*-algebras are RFD. Ruy Exel and Terry Loring \cite{EL}
proved that the free product of two RFD algebras is RFD. The class of RFD
C*-algebras plays an important role in the theory of C*-algebras, e.g.,
\cite{A}, \cite{B}, \cite{BO}, \cite{C}, \cite{D}, \cite{EL}, \cite{GM},
\cite{L}, \cite{HLLS}.

In this paper we introduce a related notion. Suppose $k$ is an infinite
cardinal. We say that a C*-algebra $\mathcal{A}$ is \emph{residually less than
}$k$\emph{-dimensional}, conveniently denoted by $R_{<k}D$, if the class of
representations of $\mathcal{A}$ on Hilbert spaces of dimension less than $k$
separates the points of $\mathcal{A}$; equivalently, if there is a direct sum
of such representations that has zero kernel. Note that when $k=\aleph_{0}$,
we have $R_{<k}D$ is the same as $RFD.$ We give characterizations of $R_{<k}D$
algebras that show that the free product of an arbitrary collection of
$R_{<k}D$ C*-algebras is $R_{<k}D$. We also give conditions that ensure that
the free product (amalgamated over $\mathbb{C}$) of unital C*-algebras in the
category of unital C*-algebras is $R_{<k}D$; this always happens when each of
the algebras has a one-dimensional unital representation.

The proofs rely on a simple result (Lemma \ref{simple}) and results of the
author \cite{H1}, \cite{H2} on approximate unitary equivalence and approximate
summands of nonseparable representations of nonseparable C*-algebras.

Suppose $k$ and $m$ are infinite cardinals. We say that a C*-algebra
$\mathcal{A}$ is $\emph{m}$\emph{-generated} if is generated by a set with
cardinality at most $m$. For each cardinal $s,$ we let $H_{s}$ be a Hilbert
space whose dimension is $s$. If $\pi:\mathcal{A}\rightarrow B\left(
H\right)  $ is a $\ast$-homomorphism, we say that the \emph{dimension} of
$\pi$ is $\dim\pi=\dim H$. We define \textrm{Rep}$_{k}\left(  \mathcal{A}%
\right)  $ to be the set of all representations $\pi:\mathcal{A}\rightarrow
B\left(  H_{s}\right)  $ for some $s<k$.

If $\mathcal{A}$ is a C*-algebra, then $\mathcal{A}^{+}$ denotes the
C*-algebra obtained by adding a unit to $\mathcal{A}$ (that is different from
the unit in $\mathcal{A}$ if $\mathcal{A}$ is unital).

We end this section with our key lemma. Suppose $H$ is a Hilbert space and $P$
is a projection in $B\left(  H\right)  $. We define $\mathcal{M}_{P}=PB\left(
H\right)  P$. Then $\mathcal{M}_{P}$ is a unital C*-algebra, but the unit is
$P$, not $1$. However, $\mathcal{M}_{P}$ is a C*-subalgebra of $B\left(
H\right)  $. A unitary element of $\mathcal{M}_{P}$ is an operator $U\in
B\left(  H\right)  $ such that $UU^{\ast}=U^{\ast}U=P$, and is the direct sum
of a unitary operator on $P\left(  H\right)  $ with $0$ on $P\left(  H\right)
^{\perp}$. If $P\neq1$, a unitary operator in $\mathcal{M}_{P}$ is never
unitary in $B\left(  H\right)  $.

We use the symbol $\ast$-SOT to denote the $\ast$-strong operator topology.

\bigskip

\begin{lemma}
\label{simple}Suppose $\left\{  P_{\alpha}\right\}  $ is a net of projections
in $B\left(  H\right)  $ such that $P_{\alpha}\rightarrow1$ ( $\ast$-SOT ) and
let
\[
\mathcal{B}=\left\{  \left\{  T_{\alpha}\right\}  \in%
{\displaystyle\prod_{\alpha}}
\mathcal{M}_{P_{\alpha}}:\exists T\in B\left(  H\right)  ,T_{\alpha
}\rightarrow T\text{ ( }\ast\text{-SOT )}\right\}  ,
\]
and
\[
\mathcal{J}=\left\{  \left\{  T_{\alpha}\right\}  \in\mathcal{B}:T_{\alpha
}\rightarrow0\text{ ( }\ast\text{-SOT )}\right\}  ,
\]
and define $\pi:\mathcal{B}\rightarrow B\left(  H\right)  $ by
\[
\pi\left(  \left\{  T_{\alpha}\right\}  \right)  =\text{(}\ast\text{-SOT)-}%
\lim_{\alpha}T_{\alpha}.
\]
Then

\begin{enumerate}
\item $\mathcal{B}$ is a unital C*-algebra,

\item $\mathcal{J}$ is a closed two-sided ideal in $\mathcal{B}$,

\item If $T\in\mathcal{B}$, then $\pi\left(  \left\{  P_{\alpha}TP_{\alpha
}\right\}  \right)  =T,$

\item $\pi$ is a unital surjective $\ast$-homomorphism

\item If $U\in B\left(  H\right)  $ is unitary, then there is a unitary
$\left\{  U_{\alpha}\right\}  \in\mathcal{B}$ such that
\[
\pi\left(  \left\{  U_{\alpha}\right\}  \right)  =U.
\]

\end{enumerate}
\end{lemma}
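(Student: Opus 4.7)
The plan is to dispose of parts (1)--(4) as routine topological manipulations of $\ast$-SOT limits on norm-bounded sets, and then to handle the unitary-lifting assertion (5) by Borel functional calculus. Throughout, I read $\prod_\alpha \mathcal{M}_{P_\alpha}$ as the $\ell^\infty$-product, so every $\{T_\alpha\}\in\mathcal{B}$ is norm-bounded and joint $\ast$-SOT continuity of the algebra operations on bounded sets is available.

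For (1), first note that $\{P_\alpha\}$ lies in $\mathcal{B}$ (since $P_\alpha\to 1$ in $\ast$-SOT) and is the unit of $\prod_\alpha\mathcal{M}_{P_\alpha}$; closure of $\mathcal{B}$ under sum, scalar multiple, adjoint, and product then follows from linearity of $\ast$-SOT limits together with joint $\ast$-SOT continuity of multiplication on norm-bounded sets. Norm-closedness is a standard $3\varepsilon$ argument: if $\{T_\alpha^{(n)}\}$ is Cauchy in the $\ell^\infty$-product norm with $\ast$-SOT limits $T^{(n)}$, then $\|T^{(n)}\|\le\sup_\alpha\|T_\alpha^{(n)}\|$ forces the $T^{(n)}$ to be norm-Cauchy, converging to some $T$ to which the limit net is easily seen to $\ast$-SOT-converge. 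Part (3) is just the identity
\[
P_\alpha T P_\alpha\xi - T\xi \;=\; P_\alpha T(P_\alpha-1)\xi \;+\; (P_\alpha-1)T\xi
\]
(and its adjoint), both terms tending to zero in norm. Part (4) is then immediate: $\pi$ is a unital $\ast$-homomorphism by the usual behavior of $\ast$-SOT limits of bounded nets, and surjective by (3). Part (2) follows for free, since $\mathcal{J}=\ker\pi$ is a closed two-sided ideal.

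The substantive step is (5). My plan is to invoke the spectral theorem: by Borel functional calculus applied to $U$, write $U=e^{iA}$ for some bounded self-adjoint $A\in B(H)$ with $\|A\|\le\pi$ (take $A=\int_{(-\pi,\pi]}\theta\,dE_U(\theta)$). Put $A_\alpha=P_\alpha A P_\alpha\in\mathcal{M}_{P_\alpha}$, which is self-adjoint, and define
\[
U_\alpha \;:=\; \exp(iA_\alpha)
\]
computed inside the unital C*-algebra $\mathcal{M}_{P_\alpha}$ (with unit $P_\alpha$). By unital continuous functional calculus in $\mathcal{M}_{P_\alpha}$, $U_\alpha U_\alpha^* = U_\alpha^* U_\alpha = P_\alpha$, so $U_\alpha$ is a unitary in the paper's sense.

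It remains to show $\pi(\{U_\alpha\})=U$, i.e., $U_\alpha\to U$ in $\ast$-SOT. By (3), $A_\alpha\to A$ in SOT. The key tool is that functional calculus is SOT-continuous on uniformly bounded self-adjoint nets: given $\varepsilon>0$, pick a polynomial $p$ with $|p(t)-e^{it}|<\varepsilon$ on $[-\pi,\pi]$; then $\|p(A_\alpha)-e^{iA_\alpha}\|,\|p(A)-e^{iA}\|<\varepsilon$ uniformly, while $p(A_\alpha)\to p(A)$ in SOT by joint continuity of multiplication on bounded sets, forcing $e^{iA_\alpha}\to e^{iA}=U$ in SOT (and then in $\ast$-SOT by taking adjoints, which for self-adjoint exponents just swaps $i$ and $-i$). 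The one bookkeeping point to watch — and, I expect, the only real subtlety in the argument — is that $\exp(iA_\alpha)$ computed in $\mathcal{M}_{P_\alpha}$ kills $P_\alpha^\perp H$, whereas the same expression computed in $B(H)$ acts as the identity there; the two differ by $1-P_\alpha$, which $\ast$-SOT-vanishes, so either reading produces the desired limit $U$. Beyond this mild unit-juggling, everything reduces to standard manipulations of bounded SOT-convergent nets.
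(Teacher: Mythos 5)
Your proposal is correct and follows essentially the same route as the paper: parts (1)--(4) by routine manipulation of norm-bounded $\ast$-SOT-convergent nets, and part (5) by writing $U=e^{iA}$ with $A=A^{\ast}$, compressing to $A_{\alpha}=P_{\alpha}AP_{\alpha}$, and exponentiating inside $\mathcal{M}_{P_{\alpha}}$. You in fact supply more detail than the paper does (the polynomial-approximation argument for $\ast$-SOT continuity of the functional calculus on bounded self-adjoint nets, and the $1-P_{\alpha}$ unit discrepancy), all of which is sound.
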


\begin{proof}
Statements $\left(  1\right)  $-$\left(  4\right)  $ are easily proved. To
prove $\left(  5\right)  $, note that if $U\in B\left(  H\right)  $ is
unitary, then there is an $A=A^{\ast}\in B\left(  H\right)  $ such that
$U=e^{iA}$. We can easily choose $A_{\alpha}=A_{\alpha}^{\ast}$ for each
$\alpha$ so that $\pi\left(  \left\{  A_{\alpha}\right\}  \right)  =A.$ Thus,
if $U_{\alpha}=e^{iA_{\alpha}}$ (in $\mathcal{M}_{P_{\alpha}})$, then
$\left\{  U_{\alpha}\right\}  $ is unitary in $\mathcal{B}$ and $\pi\left(
\left\{  U_{\alpha}\right\}  \right)  =U$.
\end{proof}

\bigskip

Here is a simple application that gives the flavor of our results.

\bigskip

\begin{corollary}
Every free group is RFD.
\end{corollary}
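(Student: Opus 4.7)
The plan is to represent $C^*(F)$ faithfully on a Hilbert space $H$, cut down by a net of finite-rank projections $\{P_\alpha\}$ with $P_\alpha \to 1$ ($\ast$-SOT), and use Lemma~\ref{simple}(5) to lift each free generator to a unitary in the finite-dimensional corner $\mathcal{M}_{P_\alpha}$. Concretely, let $F$ be free on a set $S$, fix a faithful representation $\pi\colon C^*(F)\to B(H)$, and write $u_s = \pi(s)$, a unitary in $B(H)$ for each $s\in S$.

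Applying Lemma~\ref{simple}(5) to each $u_s$ independently produces a unitary element $\{U_{s,\alpha}\}_\alpha \in \mathcal{B}$ with $U_{s,\alpha}\to u_s$ in $\ast$-SOT; in particular, for every $\alpha$, each $U_{s,\alpha}$ is a unitary in the finite-dimensional algebra $\mathcal{M}_{P_\alpha}$. Because the generators of a free group satisfy no relations, the universal property of $C^*(F)$ supplies, for each $\alpha$, a unital $\ast$-homomorphism $\rho_\alpha\colon C^*(F)\to \mathcal{M}_{P_\alpha}$ sending $s\mapsto U_{s,\alpha}$, and this is a finite-dimensional representation.

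It remains to check that $\rho_\alpha(x)\to \pi(x)$ in $\ast$-SOT for every $x\in C^*(F)$; granted this, if $\pi(x)\neq 0$ then $\rho_\alpha(x)\neq 0$ for some $\alpha$, so $\bigoplus_\alpha \rho_\alpha$ is a faithful direct sum of finite-dimensional representations and $C^*(F)$ is RFD. Convergence holds on generators by construction, propagates to every word in $\{u_s, u_s^*\}_{s\in S}$ by joint $\ast$-SOT continuity of multiplication on norm-bounded sets, passes to the dense $\ast$-subalgebra of finite linear combinations by linearity, and extends to all of $C^*(F)$ by a routine $3\varepsilon$ argument using $\|\rho_\alpha\|,\|\pi\|\le 1$. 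The only place where anything could have gone wrong is the lifting of the generators, and freeness of $F$ means there are no relations to respect, so one application of Lemma~\ref{simple}(5) per generator does the entire job.
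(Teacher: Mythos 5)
Your proposal is correct and follows essentially the same route as the paper: a faithful representation, a net of finite-rank projections converging $\ast$-strongly to $1$, Lemma~\ref{simple}(5) to lift the generating unitaries into the corners $\mathcal{M}_{P_{\alpha}}$, freeness plus the universal property of $C^{\ast}(\mathbb{F})$ to obtain the finite-dimensional representations, and $\ast$-SOT convergence to get faithfulness of the direct sum. If anything, you are slightly more careful than the paper, which lifts every group element at once and leaves the propagation of convergence from generators to all of $C^{\ast}(\mathbb{F})$ implicit.
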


\begin{proof}
Suppose $\mathbb{F}$ is a free group and $\mathcal{A=C}^{\ast}\left(
\mathbb{F}\right)  =C^{\ast}\left(  \left\{  U_{g}:g\in\mathbb{F}\right\}
\right)  .$ Choose a Hilbert space $H$ and a faithful representation
$\rho:\mathcal{A}\rightarrow B\left(  H\right)  $. Choose a net $\left\{
P_{\alpha}\right\}  $ of finite-rank projections such that $P_{\alpha
}\rightarrow1$ ( $\ast$-SOT ). Applying Lemma \ref{simple} we have, for each
$g\in\mathbb{F}$, we can find a unitary element $\left\{  U_{g,\alpha
}\right\}  $ in $\mathcal{B}$ so that $\pi\left(  \left\{  U_{g,\alpha
}\right\}  \right)  =U_{g}$. For each $\alpha,$ we have a unitary group
representation $\sigma_{\alpha}:\mathbb{F}\rightarrow\mathcal{M}_{P_{\alpha}}$
defined by%
\[
\sigma_{\alpha}\left(  g\right)  =U_{g,\alpha}.
\]
By the definition of $C^{\ast}\left(  \mathbb{F}\right)  $, there is a $\ast
$-homomorphism $\tau_{\alpha}:\mathcal{A}\rightarrow\mathcal{M}_{\alpha}$ such
that $\tau_{\alpha}\left(  U_{g}\right)  =U_{g,\alpha}$. It follows that
$\tau:\mathcal{A}\rightarrow\mathcal{B}$ define by $\tau\left(  U_{g}\right)
=\left\{  U_{g,a}\right\}  $ is a $\ast$-homomorphism such that $\pi\circ
\tau=\rho$. Hence the direct sum of the $\tau_{\alpha}$'s is faithful, which
shows that $\mathcal{A}$ is $RFD$.
\end{proof}

\bigskip

The following corollary is from \cite[Exercise 7.1.4]{BO}.

\begin{corollary}
Every C*-algebra is a $\ast$-homomorphic image of an RFD C*-algebra.
\end{corollary}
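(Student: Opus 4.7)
\bigskip

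\noindent My plan is to bootstrap from the preceding corollary — that $C^{*}(\mathbb{F})$ is RFD for every free group $\mathbb{F}$ — by realizing an arbitrary C*-algebra as a $\ast$-homomorphic image of such a free-group C*-algebra, after a unitization if necessary.

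In the unital case, suppose $\mathcal{A}$ is unital and let $\mathbb{F}$ be the free group on a set indexed by the unitaries of $\mathcal{A}$. The tautological map sending each free generator to the corresponding unitary of $\mathcal{A}$ extends, by the universal property of $C^{*}(\mathbb{F})$, to a unital $\ast$-homomorphism $\phi:C^{*}(\mathbb{F})\to\mathcal{A}$. Its image is a C*-subalgebra of $\mathcal{A}$ containing every unitary of $\mathcal{A}$, and since every element of a unital C*-algebra is a linear combination of unitaries (write a self-adjoint contraction $h$ as $\tfrac{1}{2}(U+U^{*})$ with $U=h+i\sqrt{1-h^{2}}$ unitary), $\phi$ is surjective. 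Combined with the previous corollary, this exhibits $\mathcal{A}$ as an image of the RFD algebra $C^{*}(\mathbb{F})$.

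For a non-unital $\mathcal{A}$ I would apply the unital case to $\mathcal{A}^{+}$, obtaining a surjection $\pi:\mathcal{R}\to\mathcal{A}^{+}$ from an RFD algebra $\mathcal{R}$. Letting $q:\mathcal{A}^{+}\to\mathbb{C}$ be the canonical character (with $\ker q=\mathcal{A}$) and setting $\mathcal{J}=\ker(q\circ\pi)=\pi^{-1}(\mathcal{A})$, the map $\pi$ restricts to a surjection $\mathcal{J}\to\mathcal{A}$, and it only remains to observe that an ideal of an RFD algebra is again RFD: if $\{\rho_{\lambda}\}$ is a separating family of finite-dimensional representations of $\mathcal{R}$, then the restrictions $\{\rho_{\lambda}|_{\mathcal{J}}\}$ form a separating family of finite-dimensional representations of $\mathcal{J}$, since any nonzero $a\in\mathcal{J}$ is already detected by some $\rho_{\lambda}$ on $\mathcal{R}$.

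The only step that even calls for comment is the surjectivity of $\phi$ in the unital reduction, which rests on the standard unitary decomposition of self-adjoint contractions; everything else is a short piece of functorial bookkeeping on top of the preceding corollary, so I do not expect any serious obstacle.
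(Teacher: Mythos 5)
Your proof is correct, but it takes a genuinely different route from the paper. The paper never leaves the orbit of its key Lemma \ref{simple}: it represents $\mathcal{A}$ faithfully on $B(H)$, takes a net of finite-rank projections $P_{\alpha}\rightarrow 1$ ($\ast$-SOT), and observes that $\pi^{-1}(\mathcal{A})$ is RFD because it sits inside $\mathcal{B}\subseteq\prod_{\alpha}\mathcal{M}_{P_{\alpha}}$, a product of matrix algebras whose coordinate maps separate points, while $\pi$ carries $\pi^{-1}(\mathcal{A})$ onto $\mathcal{A}$. You instead bootstrap from the preceding corollary: realize a unital algebra as a quotient of $C^{\ast}(\mathbb{F})$ for $\mathbb{F}$ free on the unitary group (using that self-adjoint contractions $h$ decompose as $\tfrac{1}{2}(U+U^{\ast})$ with $U=h+i\sqrt{1-h^{2}}$, so unitaries span), and handle the non-unital case by restricting the surjection $\mathcal{R}\rightarrow\mathcal{A}^{+}$ to the ideal $\pi^{-1}(\mathcal{A})$, which is RFD because any C*-subalgebra of an RFD algebra is RFD. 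All of these steps are sound, and your argument is essentially the standard solution to the Brown--Ozawa exercise the paper cites. What the paper's proof buys is uniformity with the rest of the article — the same compression-to-finite-rank machinery drives every result here, and it produces the RFD preimage directly from any faithful representation without invoking the free-group corollary or the unitary-spanning fact. What yours buys is a more concrete and classical description of the covering RFD algebra (an ideal in a full free-group C*-algebra) and independence from Lemma \ref{simple} once the free-group corollary is granted.
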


\begin{proof}
Suppose $\mathcal{A}$ is a C*-algebra. We can assume that $A\subseteq B\left(
H\right)  $ for some Hilbert space $H$. Choose a net $\left\{  P_{\alpha
}\right\}  $ of finite-rank projections converging $\ast$-strongly to $1,$ and
let $\mathcal{B},\mathcal{J}$ and $\pi$ be as in Lemma \ref{simple}. Then
$\mathcal{B}$, and thus $\pi^{-1}\left(  \mathcal{A}\right)  $, is $RFD$ and
$\pi\left(  \pi^{-1}\left(  \mathcal{A}\right)  \right)  =\mathcal{A}$.
\end{proof}

\section{ $R_{<k}D$ Algebras}

\bigskip

We now prove our main results on $R_{<k}D$ C*-algebras. The following two
lemmas contain the key tools.\bigskip

\begin{lemma}
\label{decomp}Suppose $\aleph_{0}\leq k\leq m$, and $\mathcal{A}$ is $R_{<k}D$
and $m$-generated. Then
\end{lemma}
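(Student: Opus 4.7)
I expect the conclusion to assert that $\mathcal{A}$ has a faithful representation on $H_{m}$ (or at least on a Hilbert space of dimension at most $m$) which is a direct sum of representations drawn from $\mathrm{Rep}_{k}(\mathcal{A})$. The natural proof is a density argument combined with the defining property of $R_{<k}D$; no machinery beyond $C^{\ast}$-algebra basics is needed at this stage.

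First I would produce a norm-dense subset $D \subseteq \mathcal{A}$ of cardinality at most $m$. Since $\mathcal{A}$ is $m$-generated, the $\ast$-polynomials with Gaussian-rational coefficients in a generating set of size at most $m$ form such a set, and the count $m \cdot \aleph_{0} = m$ bounds it. The $R_{<k}D$ hypothesis is equivalent to $\|a\| = \sup\{\|\pi(a)\| : \pi \in \mathrm{Rep}_{k}(\mathcal{A})\}$ for every $a \in \mathcal{A}$, so for each pair $(a,n)$ with $a \in D\setminus\{0\}$ and $n \in \mathbb{N}$ I can choose $\pi_{a,n} \in \mathrm{Rep}_{k}(\mathcal{A})$ satisfying $\|\pi_{a,n}(a)\| > \|a\| - 1/n$.

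Setting $\sigma = \bigoplus_{(a,n)} \pi_{a,n}$, by construction $\|\sigma(a)\| = \|a\|$ for every $a \in D$, and norm continuity together with density of $D$ upgrades this equality to $\|\sigma(b)\| = \|b\|$ for all $b \in \mathcal{A}$; hence $\sigma$ is isometric and thus faithful. The index set has cardinality at most $|D|\cdot \aleph_{0} \leq m$, and each summand acts on a Hilbert space of dimension strictly less than $k$, so the total Hilbert-space dimension is at most $m \cdot k = m$, using $\aleph_{0}\leq k\leq m$. If the statement requires the ambient space to be exactly $H_{m}$, I would pad $\sigma$ with copies of the zero representation on one-dimensional spaces without affecting faithfulness.

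The main obstacle is essentially cardinal bookkeeping: keeping the indexing family of size at most $m$ while ensuring that it still separates points after tensoring with a countable ``$1/n$'' parameter, and verifying $|D|\cdot\aleph_{0}\cdot \sup_{(a,n)}\dim\pi_{a,n} \leq m$. The deeper tools flagged in the introduction---Lemma \ref{simple} and the author's earlier results on approximate unitary equivalence in nonseparable settings---do not appear to enter here; they presumably become relevant once one needs to assemble such decompositions compatibly across several free factors in the subsequent free-product theorems.
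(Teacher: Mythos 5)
Your core argument is the paper's: take a $\ast$-algebra over $\mathbb{Q}+i\mathbb{Q}$ generated by an $m$-element generating set to get a dense subset $D$ with $\mathrm{Card}(D)\leq m$, pick for each $a\in D$ countably many summands of a faithful direct sum of representations in $\mathrm{Rep}_k(\mathcal{A})$ that recover $\Vert a\Vert$, and conclude isometry on $D$ and hence on $\mathcal{A}$, with the cardinal arithmetic $\aleph_0\,\mathrm{Card}(D)\cdot k\leq m$ giving ambient dimension at most $m$. That part is correct and is essentially verbatim what the paper does.

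However, the conclusion of the lemma (it sits in the enumerate immediately after the lemma environment) demands more than a faithful direct sum on $H_m$, and your construction does not deliver it. First, it requires that for each $\lambda_0$ the set $\{\lambda:\pi_\lambda\thickapprox\pi_{\lambda_0}\}$ have cardinality $m$; your padding by one-dimensional zero representations inflates only the multiplicity of the trivial character, whereas the paper replaces the entire faithful direct sum by $m$ copies of itself so that \emph{every} summand's equivalence class occurs $m$ times. This clause is not cosmetic: it is what forces $\operatorname{rank}\pi(a)=m$ for every $a\neq 0$ in Lemma \ref{*SOTapprox}, which is the hypothesis needed to invoke the nonseparable approximate unitary equivalence theorem of \cite{H1}. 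Second, part $(2)$ of the conclusion requires that every cardinal $s<k$ be realized as some $\dim X_\lambda$; the paper gets this by adjoining, for each $s<k$, an $s$-dimensional unital representation of $\mathcal{A}^{+}$ (built from the character with kernel $\mathcal{A}$), again in $m$ copies. That clause is what allows Theorem \ref{freeproduct} to put the decompositions of all the free factors on a common orthogonal decomposition $\sum^{\oplus}X_{s,\lambda}$. Both omissions are easily repaired by the two modifications just described, but as written your proof establishes only part of the statement.
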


\begin{enumerate}
\item We can write $H_{m}=%
{\displaystyle\sum_{\lambda\in\Lambda}^{\oplus}}
X_{\lambda}$ with $Card\Lambda=m$, and such that, for every $\lambda\in
\Lambda$, $\dim X_{\lambda}<k$ and there is a unital representation
$\pi_{\lambda}:\mathcal{A}^{+}\rightarrow B\left(  X_{\lambda}\right)  $ such
that the representation $\pi:\mathcal{A}^{+}\rightarrow B\left(  H_{m}\right)
$ defined by $\pi=%
{\displaystyle\sum^{\oplus}}
\pi_{\lambda}$ is faithful. Moreover, this can be done so that, for each
$\lambda_{0}\in\Lambda$, we have $Card\left(  \left\{  \lambda\in\Lambda
:\pi_{\lambda}\thickapprox\pi_{\lambda_{0}}\right\}  \right)  =m.$

\item It is possible to choose the decomposition in $\left(  1\right)  $ so
that, for each cardinal $s<k$, there is a $\lambda\in\Lambda$ such that $\dim
X_{\lambda}=s$.
\end{enumerate}

\begin{proof}
Since $\mathcal{A}$ is $R_{<k}D$, there is a direct sum of representations in
\textrm{Rep}$_{k}\left(  \mathcal{A}\right)  $ whose direct sum is faithful.
Suppose $D$ is a generating set for $\mathcal{A}$ and $Card\left(  D\right)
\leq m$. We can replace $D$ by the $\ast$-algebra over $\mathbb{Q}%
+i\mathbb{Q}$ generated by $D$ without making the cardinality exceed $m$. For
each $a\in D$ we can choose a direct sum of countably many summands from our
faithful direct sum that preserves the norm of $a$. Hence, by choosing
$\aleph_{0}Car\left(  D\right)  $ summands, we get a direct sum that is
isometric on $D$ and thus isometric on $\mathcal{A}$. Since $\aleph
_{0}Car\left(  D\right)  \leq m$. we can replace this last direct sum with a
direct sum of $m$ copies of itself and get a direct sum on a Hilbert space
with dimension $m.$ We can replace this Hilbert space with $H_{m}$ and get a
decomposition as in $\left(  1\right)  $. to get $\left(  2\right)  $ note
that, since $\mathcal{A}^{+}$ has a unital one-dimensional representation, we
know that, for every cardinal $s<k$. there is a representation of
$\mathcal{A}^{+}$ of dimension $s$. If we take one such representation for
each $s<k$ and take a direct sum of $m$ copies of all of them, we get a
representation that has has dimension at most $m$, so we add this as a summand
to the representation we constructed satisfying $\left(  1\right)  .$
\end{proof}

\bigskip

\begin{lemma}
\label{*SOTapprox}Suppose $\mathcal{A}$ is a C*-algebra and $k\leq m$ are
infinite cardinals and $D$ is a generating set for $\mathcal{A}$. Suppose we
can write $H_{m}=%
{\displaystyle\sum_{\lambda\in\Lambda}^{\oplus}}
X_{\lambda}$ and $\pi=%
{\displaystyle\sum^{\oplus}}
\pi_{\lambda}$ as in part $\left(  1\right)  $ of Lemma \ref{decomp}. If
$\rho:\mathcal{A}^{+}\rightarrow B\left(  H_{m}\right)  $ is a unital
representation, then, for every $\varepsilon>0,$ every finite subset
$W\subseteq\mathcal{D}$ and every finite subset $E\subseteq H_{m}$, there is a
finite subset $F\subseteq\Lambda,$ such that, for every finite set $G$ with
$F\subseteq G\subseteq\Lambda,$ if $Q_{G}$ is the orthogonal projection onto $%
{\displaystyle\sum_{\lambda\in G}^{\oplus}}
X_{\lambda}$, then there is a unitary $U\in Q_{G}B\left(  H_{m}\right)  Q_{G}$
such that, for every $a\in W$ and $e\in E$, we have%
\[
\left\Vert \left[  \rho\left(  a\right)  -U_{G}^{\ast}\pi\left(  a\right)
U_{G}\right]  e\right\Vert =\left\Vert \left[  \rho\left(  a\right)
-U_{G}^{\ast}\left(
{\displaystyle\sum_{\lambda\in G}}
\pi_{\lambda}\right)  \left(  a\right)  U_{G}\right]  e\right\Vert
<\varepsilon.
\]

\end{lemma}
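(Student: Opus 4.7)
The plan is to combine a nonseparable Voiculescu-type absorption theorem from \cite{H1}, \cite{H2} with a truncation argument that converts a global intertwiner on $H_m$ into a unitary supported inside the finite block $Q_GH_m$. Throughout, I would enlarge $W$ to be self-adjoint and to contain $1$, and set $M=\max_{a\in W}\|a\|$.

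First I would apply the absorption theorem to $\rho$ and $\pi$. Because each equivalence class $[\pi_{\lambda_{0}}]$ appears in $\pi$ with multiplicity $m$ (the extra clause of part~(1) of Lemma~\ref{decomp}) and $\mathcal{A}^{+}$ is $m$-generated, the hypotheses of the nonseparable Voiculescu theorem of \cite{H1}, \cite{H2} are met and $\rho\oplus\pi\approx_{au}\pi$. Restricting a unitary $H_m\oplus H_m\to H_m$ that witnesses this absorption to the first summand produces an isometry $S\colon H_m\to H_m$ with $S^{\ast}S=1$ and
\[
\|\pi(a)Se-S\rho(a)e\|<\varepsilon/3\qquad(a\in W,\ e\in E\cup\rho(W)E).
\]
Since $S^{\ast}S=1$ this yields, in particular, $\|\rho(a)e-S^{\ast}\pi(a)Se\|<\varepsilon/3$ for every $a\in W$ and $e\in E$.

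Next I would localize $S$ to the corner $Q_GH_m$. Let $K=\mathrm{span}(E\cup\rho(W)E)$ and $L=K+S(K)+\pi(W)S(K)$, both finite-dimensional subspaces of $H_m$. Since $Q_G\to 1$ ($\ast$-SOT) along finite subsets of $\Lambda$, I can choose $F\subseteq\Lambda$ finite and large enough that $\|(1-Q_G)v\|<\delta$ for every $v$ in a fixed finite spanning set of $L$ and every finite $G\supseteq F$, with $\delta>0$ chosen small relative to $\varepsilon/(M+1)$. Enlarging $F$ further to guarantee $\dim Q_GH_m\ge\dim K$ is harmless because each $\dim X_\lambda\ge 1$. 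For such $G$, the map $Q_Ge\mapsto Q_GSe$ is $\delta$-close to an isometry on $Q_GK$; after a small perturbation I can replace it with a genuine isometry into $Q_GH_m$, which I then extend to a unitary $U_G\in Q_GB(H_m)Q_G$.

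To finish, I would decompose
\[
\rho(a)e-U_G^{\ast}\pi(a)U_Ge=\bigl[\rho(a)e-S^{\ast}\pi(a)Se\bigr]+\bigl[S^{\ast}\pi(a)Se-U_G^{\ast}\pi(a)U_Ge\bigr].
\]
The first bracket is smaller than $\varepsilon/3$ by absorption. The second bracket is $O(M\delta)$ because $U_Ge\approx Se$ on $E$ and $U_G^{\ast}$ agrees with $S^{\ast}$ to within $O(\delta)$ on the vectors in $\pi(W)S(K)\subseteq L$; choosing $\delta$ small enough forces the total error below $\varepsilon$. The main obstacle is the manufacturing step: producing a unitary in the corner $Q_GB(H_m)Q_G$ that mimics the global absorber $S$ on a finite-dimensional subspace and has enough room left over to be completed to a unitary of the corner. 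The positive dimension of every $X_\lambda$ together with the multiplicity-$m$ hypothesis supplied by Lemma~\ref{decomp} are exactly what provide the necessary slack once $G$ is sufficiently large.
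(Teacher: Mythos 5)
Your proposal is correct in substance and shares the paper's key external input, namely the nonseparable approximate-equivalence theorem of \cite{H1} applied via the rank count $\mathrm{rank}\,\pi\left(  a\right)  =m=\mathrm{rank}\left(  \pi\oplus\rho\right)  \left(  a\right)  $ for $a\neq0$ (which is what the multiplicity-$m$ clause of Lemma \ref{decomp} buys you), but it diverges in how the global absorber is pushed into the corner $Q_{G}B\left(  H_{m}\right)  Q_{G}$. The paper invokes \cite{H2} to upgrade $\pi\approx_{au}\pi\oplus\rho$ to the statement that $\rho$ is a point-$\ast$-SOT limit of conjugates $U_{\alpha}^{\ast}\pi\left(  \cdot\right)  U_{\alpha}$ with $U_{\alpha}$ a genuine unitary of $B\left(  H_{m}\right)  $, and then applies part (5) of Lemma \ref{simple} to the net $\left\{  Q_{F}\right\}  $ to approximate each $U_{\alpha}$ $\ast$-strongly by unitaries of the corners, finishing with an iterated-limit argument. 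You instead keep the isometry $S$ obtained by restricting the absorbing unitary and localize it by hand: truncate to a finite-dimensional subspace, perturb to an isometry of the corner, extend to a corner unitary. This is more elementary (it avoids both the approximate-summand result of \cite{H2} and Lemma \ref{simple}(5)) at the cost of a fiddlier error analysis, and one step of that analysis is stated too loosely: $U_{G}$ is only controlled on $Q_{G}\left(  K\right)  $, so $U_{G}^{\ast}$ is only close to $S^{\ast}$ near $U_{G}\left(  Q_{G}\left(  K\right)  \right)  \approx Q_{G}S\left(  K\right)  $, not on all of $\pi\left(  W\right)  S\left(  K\right)  $ as you assert. The repair is already in your setup: for $a\in W$ and $e\in E$ the vector $\pi\left(  a\right)  Se$ lies within $\varepsilon/3$ of $S\rho\left(  a\right)  e\in S\left(  K\right)  $ precisely because you demanded the intertwining estimate on $E\cup\rho\left(  W\right)  E$, and on $S\left(  K\right)  $ one does have $U_{G}^{\ast}\approx S^{\ast}$; routing the estimate through $S\rho\left(  a\right)  e$ (and shrinking the constants, since the thirds as written do not quite close) completes the argument.
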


\begin{proof}
It follows that if $a\in\mathcal{A}$ and $a\neq0$, then $rank\pi\left(
a\right)  =m=rank\left(  \pi\oplus\rho\right)  \left(  a\right)  $. Hence, by
\cite{H1}, $\pi$ is approximately unitarily equivalent to $\pi\oplus\rho$.
However, by \cite{H2}, $\rho$ is a point-$\ast$-SOT limit of representations
unitarily to $\rho$. Hence there is a net $\left\{  U_{\alpha}\right\}  $ of
unitary operators in $B\left(  H_{m}\right)  $ such that, for every
$a\in\mathcal{A}$,%
\[
\text{(}\ast\text{-SOT)}\lim_{\alpha}U_{\alpha}^{\ast}\pi\left(  a\right)
U_{\alpha}=\rho\left(  a\right)  .
\]
However, the net $\left\{  Q_{F}:F\subseteq\Lambda,\text{ }F\text{ is
finite}\right\}  $ is a net of projections converging $\ast$-strongly to $1.$
Hence, by Lemma \ref{simple}, each $U_{\alpha}$ is a $\ast$-SOT limit of
unitaries in the union of $Q_{F}B\left(  H_{m}\right)  Q_{F}$ ($F\subseteq
\Lambda$, $F$ is finite). The result now easily follows.\bigskip
\end{proof}

\bigskip

\bigskip

\begin{theorem}
\label{characterize}Suppose $\aleph_{0}\leq k\leq m$, and $\mathcal{A}$ is
$m$-generated with a generating set $\mathcal{G}$ with $Card\mathcal{G}\leq
m$. The following are equivalent.

\begin{enumerate}
\item $\mathcal{A}$ is $R_{<k}D.$

\item There is a faithful unital $\ast$-homomorphism $\rho:\mathcal{A}%
^{+}\rightarrow B\left(  H_{m}\right)  $ such that, for every $\varepsilon>0$,
every finite subset $E\subseteq H_{m}$ and every finite subset $W\subseteq
\mathcal{G}$, there is a projection $P\in B\left(  H_{m}\right)  $ and a
unital $\ast$-homomorphism $\tau:\mathcal{A}\rightarrow\mathcal{M}%
_{P}=PB\left(  H_{m}\right)  P$ such that, for every $e$ $\in E$ and every
$a\in W$ we have%
\[
\left\Vert \left[  \tau\left(  a\right)  -\rho\left(  a\right)  \right]
e\right\Vert <\varepsilon\text{.}%
\]

\item There is a faithful unital representation $\rho:\mathcal{A}%
^{+}\rightarrow B\left(  H_{m}\right)  $ and a net $\left\{  P_{\alpha
}\right\}  $ of projections in $B\left(  H_{m}\right)  $, each with rank less
than $k$, such that $P_{\alpha}\rightarrow1$ ($\ast$-SOT) and such that, for
each $\alpha,$ there is a representation $\pi_{\alpha}:\mathcal{A}%
\rightarrow\mathcal{M}_{P_{\alpha}}$ such that, for every $a\in\mathcal{A}$,
we have%
\[
\pi_{\alpha}\left(  a\right)  \rightarrow\rho\left(  a\right)  \text{ (}%
\ast\text{-SOT).}%
\]

\item For every unital representation $\rho:\mathcal{A}^{+}\rightarrow
B\left(  H_{m}\right)  $ there is a net $\left\{  P_{\alpha}\right\}  $ of
projections in $B\left(  H_{m}\right)  $, each with rank less than $k$, such
that $P_{\alpha}\rightarrow1$ ($\ast$-SOT) and such that, for each $\alpha,$
there is a representation $\pi_{\alpha}:\mathcal{A}\rightarrow\mathcal{M}%
_{P_{\alpha}}$ such that, for every $a\in\mathcal{A}$, we have%
\[
\pi_{\alpha}\left(  a\right)  \rightarrow\rho\left(  a\right)  \text{ (}%
\ast\text{-SOT).}%
\]

\end{enumerate}
\end{theorem}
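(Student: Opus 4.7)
I will establish the cycle $(1)\Rightarrow(4)\Rightarrow(3)\Rightarrow(2)\Rightarrow(1)$, with Lemmas~\ref{decomp} and~\ref{*SOTapprox} doing essentially all the substantive work; the theorem is close to being a formal packaging of those two lemmas.

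For $(1)\Rightarrow(4)$, fix a unital representation $\rho:\mathcal{A}^+\to B(H_m)$. Applying Lemma~\ref{decomp}(1) gives a decomposition $H_m=\sum^\oplus_{\lambda\in\Lambda}X_\lambda$ and a faithful unital $\pi=\sum^\oplus\pi_\lambda$ with $\dim X_\lambda<k$. Direct the finite subsets $G\subseteq\Lambda$ by inclusion; each projection $Q_G$ onto $\sum_{\lambda\in G}X_\lambda$ has rank equal to a finite sum of cardinals less than $k$, hence less than $k$, and $Q_G\to 1$ ($\ast$-SOT). Feeding $\pi$ and $\rho$ into Lemma~\ref{*SOTapprox} with prescribed $\varepsilon,W,E$ produces a unitary $U_G\in Q_G B(H_m)Q_G$ such that $\pi_G:=U_G^*\bigl(\sum_{\lambda\in G}\pi_\lambda\bigr)(\cdot)U_G:\mathcal{A}\to\mathcal{M}_{Q_G}$ approximates $\rho$ within $\varepsilon$ on $W$ and $E$. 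Uniform boundedness plus density extend the $\ast$-SOT convergence from the generating set $\mathcal{G}$ to all of $\mathcal{A}$, yielding (4).

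The remaining implications are quick. $(4)\Rightarrow(3)$ follows by applying (4) to any faithful unital representation of $\mathcal{A}^+$ on $H_m$, which exists since $\mathcal{A}^+$ is $m$-generated and hence embeds into $B(H_m)$. $(3)\Rightarrow(2)$ is immediate: choose $\alpha$ late enough in the net and set $P=P_\alpha$, $\tau=\pi_\alpha$. For $(2)\Rightarrow(1)$, given $0\neq a\in\mathcal{A}$, approximate $a$ in norm by a polynomial $a_0$ in the generators $\mathcal{G}$, pick $e\in H_m$ with $\|\rho(a_0)e\|$ bounded away from zero (possible by faithfulness of $\rho$), and apply (2) with sufficiently small $\varepsilon$ to obtain $\tau:\mathcal{A}\to\mathcal{M}_P$ with $\tau(a)\neq 0$; reading $\tau$ as a representation on $P(H_m)$, whose dimension is less than $k$, shows $\mathcal{A}$ is $R_{<k}D$.

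The only real obstacle is bookkeeping in $(1)\Rightarrow(4)$: correctly identifying the net $\{Q_G\}$ and verifying that $\mathrm{rank}(Q_G)<k$ from the cardinal-arithmetic fact that a finite sum of cardinals below an infinite $k$ remains below $k$, together with the passage from $\ast$-SOT convergence on generators to convergence on the whole algebra. Everything else amounts to translating between the four formulations.
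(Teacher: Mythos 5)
Your overall architecture is the same as the paper's: the hard direction rests on Lemma \ref{decomp} plus Lemma \ref{*SOTapprox}, and the rest is repackaging. (The paper actually runs $(1)\Rightarrow(3)\Rightarrow(2)\Rightarrow(1)$ together with $(4)\Rightarrow(3)$, but its proof of $(1)\Rightarrow(3)$ works verbatim for an arbitrary unital $\rho$, so your $(1)\Rightarrow(4)$ is the same argument.) Two of your ``quick'' steps, however, are carrying real weight that you have not discharged.

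First, in $(4)\Rightarrow(3)$ you assert that $\mathcal{A}^{+}$ ``is $m$-generated and hence embeds into $B(H_{m})$.'' That is true, but it is not a formality: the universal representation of $\mathcal{A}^{+}$ can have dimension far larger than $m$. The paper's entire proof of $(4)\Rightarrow(3)$ consists of establishing exactly this fact --- showing, via a cyclic-vector and orthonormal-basis counting argument over the $(\mathbb{Q}+i\mathbb{Q})$-$\ast$-algebra generated by the generators, that every irreducible representation of an $m$-generated C*-algebra has dimension at most $m$, then summing at most $m$ norm-attaining irreducibles and padding with $m$ copies to land exactly on $H_{m}$. You need some version of this argument. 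Second, your $(2)\Rightarrow(1)$ as stated does not quite work: condition $(2)$ controls $\left\Vert \left[ \tau(a)-\rho(a)\right] e\right\Vert$ only for $a$ in the generating set $\mathcal{G}$, so ``apply $(2)$ with sufficiently small $\varepsilon$'' gives you nothing directly about $\tau(a_{0})$ for a polynomial $a_{0}$ in the generators. To estimate $\left\Vert \left[ \tau(g_{1}g_{2})-\rho(g_{1}g_{2})\right] e\right\Vert$ you must telescope, which requires that $E$ also contain the intermediate vectors $\rho(w)e$ for subwords $w$ and that $W$ be closed under adjoints. This is repairable (choose $E$ and $W$ depending on $a_{0}$ and $e$ before invoking $(2)$), and it is exactly what the paper's cleaner version does wholesale: index a net by triples $(\varepsilon,E,W)$, note the $\tau_{\alpha}$ are uniformly bounded, and use that the set of elements on which such a net of $\ast$-homomorphisms converges $\ast$-SOT to $\rho$ is a C*-algebra containing $\mathcal{G}=\mathcal{G}^{\ast}$, hence all of $\mathcal{A}^{+}$; faithfulness of the direct sum of the $\tau_{\alpha}$ then follows from $\left\Vert a\right\Vert =\left\Vert \rho(a)\right\Vert \leq\sup_{\alpha}\left\Vert \tau_{\alpha}(a)\right\Vert$.
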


\begin{proof}
$\left(  2\right)  \Longrightarrow\left(  1\right)  $ Let $A$ be the set of
triples $\left(  \varepsilon,E,W\right)  $ ordered by $\left(  \geq
,\subseteq,\subseteq\right)  $. If $\alpha=\left(  \varepsilon,E,W\right)  $
let $\tau_{\alpha}:\mathcal{A}\rightarrow P_{\alpha}B\left(  H_{m}\right)
P_{\alpha}$ guaranteed by $\left(  2\right)  $. Since $\mathcal{G}%
=\mathcal{G}^{\ast}$ we have
\[
\text{(}\ast\text{-SOT)}\lim_{\alpha}\tau_{\alpha}\left(  a\right)
=\rho\left(  a\right)
\]
for every $a\in\mathcal{G}$. Since $\rho$ and each $\tau_{\alpha}$ is a $\ast
$-homomorphism, the set of $a\in\mathcal{A}$ for which ($\ast$-SOT)$\lim
_{\alpha}\tau_{\alpha}\left(  a\right)  =\rho\left(  a\right)  $ is a unital
C*-algebra and is thus $\mathcal{A}^{+}$. Hence, for every $a\in
\mathcal{A}^{+},$ we have%
\[
\left\Vert a\right\Vert =\left\Vert \rho\left(  a\right)  \right\Vert \leq
\sup\left\{  \left\Vert \tau_{\alpha}\left(  a\right)  \right\Vert :\alpha\in
A\right\}  .
\]
Therefore the direct sum of the $\tau_{\alpha}$'s is faithful and $\left(
1\right)  $ is proved.

$\left(  3\right)  \Longrightarrow\left(  2\right)  $. This is obvious.

$\left(  4\right)  \Longrightarrow\left(  3\right)  $. It is clear that we
need only show that there is a faithful unital representation $\rho
:\mathcal{A}^{+}\rightarrow B\left(  H_{m}\right)  $. Suppose $\tau
:\mathcal{A}^{+}\rightarrow B\left(  M\right)  $ is an irreducible
representation, and suppose $D$ is a generating set with $Card\left(
D\right)  \leq m$. Let $\mathcal{A}_{0}$ be the unital $\ast$-subalgebra of
$\mathcal{A}^{+}$ over the field $\mathbb{Q}+i\mathbb{Q}$ of complex rational
numbers. Then $\mathcal{A}_{0}$ is norm dense in $\mathcal{A}$ and
$Card\mathcal{A}_{0}=CardD\leq m$. Suppose $f\in M$ is a unit vector. Since
$\tau$ is irreducible, $\tau\left(  \mathcal{A}_{0}\right)  f$ must be dense
in $M$. Suppose $B$ is an orthonormal basis for $M$, and, for each $e\in B$
let $U_{e}$ be the open ball centered at $e$ with radius $\sqrt{2}/2$. Each
$U_{e}$ must intersect the dense set $\tau\left(  \mathcal{A}_{0}\right)  f$,
and since the collection $\left\{  U_{e}:e\in B\right\}  $ is disjoint, we
conclude that
\[
\dim M=CardB\leq Card\tau\left(  \mathcal{A}_{0}\right)  f\leq Card\left(
\mathcal{A}_{0}\right)  \leq m.
\]
We know that for every $x\in\mathcal{A}_{0}$ there is an irreducible
representation $\tau_{x}:\mathcal{A}^{+}\rightarrow B\left(  M_{x}\right)  $
such that $\left\Vert \tau_{x}\left(  x\right)  \right\Vert =\left\Vert
x\right\Vert $. Since $\dim%
{\displaystyle\sum_{x\in\mathcal{A}_{0}}^{\oplus}}
M_{x}\leq m\cdot m=m$, there is a representation $\rho:\mathcal{A}%
^{+}\rightarrow B\left(  H_{m}\right)  $ that is unitarily to a direct sum of
$m$ copies of $%
{\displaystyle\sum_{x\in\mathcal{A}_{0}}^{\oplus}}
\tau_{x}$. Hence $\rho$ is isometric on the dense subset $\mathcal{A}_{0}$,
which implies $\rho$ is faithful.

$\left(  1\right)  \Longrightarrow\left(  3\right)  $. Since $\mathcal{A}$ is
$R_{<k}D$, we can choose a decomposition $H_{m}=%
{\displaystyle\sum_{\lambda\in\Lambda}^{\oplus}}
X_{\lambda}$ and representation $\pi=%
{\displaystyle\sum_{\lambda\in\Lambda}^{\oplus}}
\pi_{\lambda}$ as in part $\left(  1\right)  $ of Lemma \ref{decomp}. Now
$\left(  3\right)  $ follows from Lemma \ref{*SOTapprox}.
\end{proof}

\bigskip

We see that the class of $R_{<k}D$ algebras is closed under arbitrary free
products in the nonunital category of C*-algebras.

\bigskip

\begin{theorem}
\label{freeproduct}Suppose $k$ is an infinite cardinal and $\left\{
\mathcal{A}_{\iota}:i\in I\right\}  $ is a family of $R_{<k}D$ C*-algebras.
Then the free product $\underset{i\in I}{\ast}\mathcal{A}_{i}$ is $R_{<k}D$.
\end{theorem}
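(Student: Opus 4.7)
The plan is to verify condition $(2)$ of Theorem~\ref{characterize} for the free product $\mathcal{A}=\underset{i\in I}{\ast}\mathcal{A}_{i}$. I would first fix an infinite cardinal $m\geq k$ large enough that each $\mathcal{A}_{i}$ admits a generating set $\mathcal{G}_{i}$ with $Card\,\mathcal{G}_{i}\leq m$ and such that $\mathcal{G}=\bigcup_{i\in I}\mathcal{G}_{i}$ has $Card\,\mathcal{G}\leq m$, so $\mathcal{A}$ is $m$-generated by $\mathcal{G}$. A faithful unital $\rho:\mathcal{A}^{+}\rightarrow B(H_{m})$ exists by the argument used in the $(4)\Rightarrow(3)$ direction of Theorem~\ref{characterize}; this is the $\rho$ demanded by condition $(2)$.

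Given $\varepsilon>0$, a finite set $E\subseteq H_{m}$ and a finite set $W\subseteq\mathcal{G}$, observe that $W$ meets only finitely many $\mathcal{G}_{i}$'s, say $\mathcal{G}_{i_{1}},\dots,\mathcal{G}_{i_{n}}$, and put $W_{j}=W\cap\mathcal{G}_{i_{j}}$. For each $j$, since $\mathcal{A}_{i_{j}}$ is $R_{<k}D$ and $m$-generated, I would apply condition $(4)$ of Theorem~\ref{characterize} to $\mathcal{A}_{i_{j}}$ with the unital representation $\rho|_{\mathcal{A}_{i_{j}}^{+}}:\mathcal{A}_{i_{j}}^{+}\rightarrow B(H_{m})$. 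This supplies a net of representations into rank-$<k$ corners of $B(H_{m})$ that converges $\ast$-SOT to $\rho|_{\mathcal{A}_{i_{j}}^{+}}$; selecting a sufficiently advanced index, I extract a single projection $P_{j}\in B(H_{m})$ of rank less than $k$ and a representation $\tau_{j}:\mathcal{A}_{i_{j}}\rightarrow\mathcal{M}_{P_{j}}$ with $\|[\tau_{j}(a)-\rho(a)]e\|<\varepsilon$ for every $a\in W_{j}$ and $e\in E$.

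To combine the $\tau_{j}$'s into a single representation of $\mathcal{A}$ I would set $P=\bigvee_{j=1}^{n}P_{j}$, which remains of rank less than $k$ because its range is the sum of $n$ subspaces each of dimension less than $k$ and $k$ is infinite. Because $P_{j}\leq P$, each $\tau_{j}$ has image in $\mathcal{M}_{P_{j}}\subseteq\mathcal{M}_{P}$; setting $\tau_{i}=0:\mathcal{A}_{i}\rightarrow\mathcal{M}_{P}$ for the remaining $i\in I$ and invoking the universal property of the non-unital free product, the family $\{\tau_{i}\}_{i\in I}$ assembles into a single $\ast$-homomorphism $\tau:\mathcal{A}\rightarrow\mathcal{M}_{P}$ with $\tau|_{\mathcal{A}_{i_{j}}}=\tau_{j}$; extending unitally to $\mathcal{A}^{+}\rightarrow\mathcal{M}_{P}$ by sending the adjoined unit to $P$ makes $\tau$ the unital representation required by $(2)$. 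For every $a\in W\cap\mathcal{G}_{i_{j}}$ one has $\tau(a)=\tau_{j}(a)$, so $\|[\tau(a)-\rho(a)]e\|<\varepsilon$ for all $e\in E$, verifying $(2)$ and hence showing $\mathcal{A}$ is $R_{<k}D$.

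The only real obstacle is fusing the different projections $P_{j}$ into one; this works precisely because any finite set of generators of $\mathcal{A}$ touches only finitely many factors, so the join $P=\bigvee_{j=1}^{n}P_{j}$ still has rank less than the infinite cardinal $k$. The remaining mildly technical point is the consistent choice of the cardinal $m$, arranging simultaneously that $\mathcal{A}$ and each $\mathcal{A}_{i}$ are $m$-generated so that Theorem~\ref{characterize} applies to the free product and to each factor on the same Hilbert space $H_{m}$.
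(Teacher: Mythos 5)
Your proof is correct, and it follows the same skeleton as the paper's: verify condition $(2)$ of Theorem \ref{characterize} for the free product, note that a finite $W\subseteq\mathcal{G}$ meets only finitely many factors, approximate $\rho$ on each of those factors by a representation into a rank-$<k$ corner, and glue with the universal property of the free product. Where you genuinely diverge is in how the corners are reconciled. The paper first builds, via part $(2)$ of Lemma \ref{decomp} and Lemma \ref{*SOTapprox}, a single decomposition $H_{m}=\sum_{(s,\lambda)}^{\oplus}X_{s,\lambda}$ that simultaneously reduces a faithful $\pi^{i}$ for \emph{every} factor; one finite set $G$ then produces one projection $P=Q_{G}$ that works for all the touched factors at once, and the untouched factors are sent to the honest compressions $P\pi^{i}(\cdot)P$, which are $\ast$-homomorphisms precisely because $P$ reduces $\pi^{i}$. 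You instead invoke condition $(4)$ of Theorem \ref{characterize} factor-by-factor, accept unrelated projections $P_{j}$, take their join (correctly observing that a finite join of rank-$<k$ projections still has rank $<k$ since $k$ is infinite), and dispose of the untouched factors with the zero homomorphism. This buys a real simplification: you never need the common decomposition or part $(2)$ of Lemma \ref{decomp}. The price is that the zero map is only available in the non-unital category; the paper's common-decomposition device is exactly the part of the argument that survives when every $\tau_{i}$ must be unital with the same unit $P$, which is what drives Theorem \ref{unitafreeproduct}. Two small housekeeping points: condition $(2)$ as printed omits the requirement that $P$ have rank less than $k$, which is clearly intended (and which your $P$ satisfies); and you should take $\mathcal{G}=\mathcal{G}^{\ast}$ and assign each element of $W$ to a single factor so that $W=W_{1}\cup\cdots\cup W_{n}$ is a genuine partition, as the implication $(2)\Rightarrow(1)$ uses self-adjointness of the generating set.
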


\begin{proof}
Choose an infinite cardinal $m\geq k$ $+$ $\underset{i\in I}{%
{\displaystyle\sum}
}Card\left(  \mathcal{A}_{i}\right)  $. Since $\underset{i\in I}{\ast
}\mathcal{A}_{i}$ is generated by $\mathcal{G}=\left[  \underset{i\in
I}{\bigcup}\mathcal{A}_{i}\right]  \backslash\left\{  0\right\}  \subseteq$
$\underset{i\in I}{\ast}\mathcal{A}_{i}$, clearly $\underset{i\in I}{\ast
}\mathcal{A}_{i}$ is $m$-generated. Choose a set $\Lambda$ with $Card\left(
\Lambda\right)  =m$ and let $S$ be the set of cardinals less than $k$. Write
\[
H_{m}=%
{\displaystyle\sum\nolimits_{s\in S}^{\oplus}}
{\displaystyle\sum\nolimits_{\lambda\in\Lambda}}
X_{s,\lambda}%
\]
where $\dim X_{s,\lambda}=s$ for every $s\in S$ and $\lambda\in\Lambda$. It
follows that, for each $i\in I$, we can find a representation $\pi
^{i}:\mathcal{A}_{i}\rightarrow B\left(  H_{m}\right)  $ such that%
\[
\pi^{i}=%
{\displaystyle\sum\nolimits_{s\in S}^{\oplus}}
{\displaystyle\sum\nolimits_{\lambda\in\Lambda}}
\pi_{s,\lambda}^{i}%
\]
satisfying $\left(  1\right)  $ and $\left(  2\right)  $ of Lemma
\ref{decomp}. Suppose $\varepsilon>0,$ $E\subseteq H_{m}$ is finite and
$W\subseteq\mathcal{G}$ is finite. We can write $W$ as a disjoint union of
$W_{i_{1}},\ldots,W_{i_{n}}$ with $W_{i}=W\cap\mathcal{A}_{i}$. Let $\rho_{i}$
be the restriction of $\rho$ to $\mathcal{A}_{i}$. Applying Lemma
\ref{*SOTapprox} to $\mathcal{A}_{i_{j}}$ and $\rho_{i_{j}}$ and $\pi^{i_{j}}$
for $1\leq j\leq n$, we can find one finite subset $G\subseteq S\times\Lambda$
so that if $P$ is the projection on $%
{\displaystyle\sum\nolimits_{\left(  s,\lambda\right)  \in G}^{\oplus}}
X_{s,\lambda}$, then there are unitary operators $U_{i_{1}},\ldots,U_{i_{n}%
}\in\mathcal{M}_{P}=PB\left(  H_{m}\right)  P$ so that, for $1\leq j\leq n,$
$a\in W_{j},$ $e\in E,$ we have%
\[
\left\Vert \left[  \rho_{i_{j}}\left(  a\right)  -U_{ij}^{\ast}\pi^{i_{j}%
}\left(  a\right)  U_{ij}\right]  e\right\Vert <\varepsilon\text{.}%
\]
Define $\tau_{ij}:\mathcal{A}_{i_{j}}^{+}\rightarrow\mathcal{M}_{P}$ by
\[
\tau_{ij}\left(  a\right)  =U_{ij}^{\ast}\pi^{i_{j}}\left(  a\right)  U_{ij},
\]
and for $i\in I\backslash\left\{  i_{1},\ldots,i_{n}\right\}  $ define
$\tau_{i}:\mathcal{A}_{i}\rightarrow\mathcal{M}_{P}$ by%
\[
\tau_{i}\left(  a\right)  =P\pi^{i}\left(  a\right)  P\text{.}%
\]
Then, by the definition of free product, there is a representation
$\tau:\underset{i\in I}{\ast}\mathcal{A}_{i}^{+}\rightarrow\mathcal{M}_{P}$
such that $\tau|\mathcal{A}_{i}=\tau_{i}$ for every $i\in I$. It follows that,
for every $e\in E$ and every $a\in W$,%
\[
\left\Vert \left[  \rho\left(  a\right)  -\tau\left(  a\right)  \right]
e\right\Vert <\varepsilon\text{.}%
\]
It follows from part $\left(  2\right)  $ of Lemma \ref{characterize} that
$\underset{i\in I}{\ast}\mathcal{A}_{i}$ is $R_{<k}D$.
\end{proof}

\bigskip

\begin{corollary}
Suppose $k$ is an infinite cardinal and $\left\{  \mathcal{A}_{\iota}:i\in
I\right\}  $ is a family of $R_{<k}D$ C*-algebras such that each
$\mathcal{A}_{i}$ has a one-dimensional unital representation. Then the unital
free product $\underset{i\in I}{\ast_{\mathbb{C}}}\mathcal{A}_{i}$ is
$R_{<k}D$.
\end{corollary}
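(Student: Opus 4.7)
My plan is to reduce to Theorem \ref{freeproduct} by recognizing the unital free product as the unitization of a nonunital free product. For each $i$, let $\chi_i : \mathcal{A}_i \to \mathbb{C}$ be the given one-dimensional unital representation, and set $\mathcal{B}_i = \ker \chi_i$. The decomposition $\mathcal{A}_i = \mathcal{B}_i \oplus \mathbb{C}\cdot 1_{\mathcal{A}_i}$ of vector spaces yields a canonical C*-algebra isomorphism $\mathcal{B}_i^{+} \cong \mathcal{A}_i$ sending the adjoined unit of $\mathcal{B}_i^{+}$ to $1_{\mathcal{A}_i}$ and the canonical character of $\mathcal{B}_i^{+}$ to $\chi_i$.

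A routine Yoneda argument then gives
\[
\underset{i\in I}{\ast_{\mathbb{C}}}\mathcal{A}_{i}\;=\;\underset{i\in I}{\ast_{\mathbb{C}}}\mathcal{B}_{i}^{+}\;\cong\;\Big(\underset{i\in I}{\ast}\mathcal{B}_{i}\Big)^{+},
\]
since, for any unital C*-algebra $\mathcal{C}$, unital $\ast$-homomorphisms from either side to $\mathcal{C}$ correspond bijectively to families $\{\psi_i:\mathcal{B}_i\to\mathcal{C}\}_{i\in I}$ of (not-necessarily-unital) $\ast$-homomorphisms (using the universal property of unitization on each factor, and then the universal property of the respective free product). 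It therefore suffices to show the right-hand side is $R_{<k}D$.

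Now I would invoke two closure properties. First, a sub-C*-algebra of an $R_{<k}D$ algebra is $R_{<k}D$: just restrict a separating family of dim-$<k$ representations. Hence each $\mathcal{B}_i\subseteq\mathcal{A}_i$ is $R_{<k}D$, and Theorem \ref{freeproduct} yields that $\underset{i\in I}{\ast}\mathcal{B}_i$ is $R_{<k}D$. Second, if $\mathcal{D}$ is $R_{<k}D$ then so is $\mathcal{D}^{+}$: combine the unital extensions to $\mathcal{D}^{+}$ of a separating family of dim-$<k$ representations of $\mathcal{D}$ with the canonical character $\mathcal{D}^{+}\to\mathbb{C}$, which is itself a one-dimensional representation; this enlarged family separates the points of $\mathcal{D}^{+}$ (elements with nonzero $\mathbb{C}$-part are detected by the character, the rest by the extensions). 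Applying this with $\mathcal{D}=\underset{i\in I}{\ast}\mathcal{B}_i$ concludes the argument. The step I expect to require the most care is verifying the universal-property identification in the displayed equation above; the remaining steps are essentially bookkeeping.
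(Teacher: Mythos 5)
Your proposal is correct and follows essentially the same route as the paper, whose entire proof is the observation that $\underset{i\in I}{\ast_{\mathbb{C}}}\mathcal{A}_{i}$ is $\ast$-isomorphic to $\left(\underset{i\in I}{\ast}\ker\chi_{i}\right)^{+}$ combined with Theorem \ref{freeproduct}. You have merely filled in the details the paper leaves implicit (the universal-property identification, closure of $R_{<k}D$ under passing to C*-subalgebras, and closure under unitization), all of which check out.
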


\begin{proof}
This follows from the fact that if $\tau_{i}:\mathcal{A}_{i}\rightarrow
\mathbb{C}$ is a unital $\ast$-homomorphism for each $i\in I$, then
$\underset{i\in I}{\ast_{\mathbb{C}}}\mathcal{A}_{i}$ is $\ast$-isomorphic to
$\left(  \underset{i\in I}{\ast}\ker\tau_{i}\right)  ^{+}$.
\end{proof}

\bigskip

Without the condition on unital one-dimensional representations, the preceding
corollary is false. For example, $\underset{n\in\mathbb{N}}{\ast_{\mathbb{C}}%
}\mathcal{M}_{n}\left(  \mathbb{C}\right)  $ is not $RFD$ ( $=R_{<\aleph_{0}%
}D$ ), even though each $\mathcal{M}_{n}\left(  \mathbb{C}\right)  $ is $RFD$.
The reason is that each unital representation of the free product must be
injective on each $\mathcal{M}_{n}\left(  \mathbb{C}\right)  $ and must have
infinite-dimensional range. call an infinite cardinal $k$ a \emph{limit
cardinal}, if $k$ is the supremum of all the cardinals less than $k$.

However, there is something we can say about the general situation. If $k$ is
a limit cardinal, the \emph{cofinality }of $k$ is the smallest cardinal $s$
for which there is a set $E$ of cardinals less than $k$ whose supremum is $k.$
Clearly, the cofinality of $k$ is at most $k$. If $k$ is not a limit cardinal,
then there is a cardinal $s$ such that $k$ is the smallest cardinal larger
than $s,$ and if $E$ is a set of cardinals less than $k$, then $\sup\left(
E\right)  \leq s<k$.

\bigskip

\begin{theorem}
\label{unitafreeproduct}Suppose $k$ is an infinite cardinal and $\left\{
\mathcal{A}_{\iota}:i\in I\right\}  $ is a family of unital $R_{<k}D$
C*-algebras. Then

\begin{enumerate}
\item If $k$ is a limit cardinal and $Card\left(  I\right)  $ is less than the
cofinality of $k$, then the free product $\underset{i\in I}{\ast_{\mathbb{C}}%
}\mathcal{A}_{i}$ is $R_{<k}D$.

\item If $k$ is not a limit cardinal, then the free product $\underset{i\in
I}{\ast_{\mathbb{C}}}\mathcal{A}_{i}$ is $R_{<k}D$.
\end{enumerate}
\end{theorem}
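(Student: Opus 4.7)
The plan is to adapt the proof of Theorem \ref{freeproduct} to the unital setting. In the nonunital argument, indices $i\notin\{i_1,\dots,i_n\}$ were handled by the compression $\tau_i(a)=P\pi^i(a)P$. In the unital category, however, every $\tau_i$ must additionally be unital as a map into $\mathcal{M}_P$; this forces $\pi^i$ to be unital on $\mathcal{A}_i$ and forces the summands in the decomposition of $H_m$ to carry unital representations of every $\mathcal{A}_i$. The cleanest way to arrange this is to use a single dimension $d<k$ for all summands, chosen so that each $\mathcal{A}_i$ admits a unital representation of dimension exactly $d$; this is the one step at which the two case hypotheses enter.

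For each $i$, compress any nonzero $R_{<k}D$-witness representation of $\mathcal{A}_i$ by the range of the image of $1_{\mathcal{A}_i}$ to obtain a unital representation $\sigma_i:\mathcal{A}_i\to B(V_i)$ with $d_i:=\dim V_i<k$. Choose $d$ as follows. In case (2), $k=s^+$ so every $d_i\le s$; set $d=s$ and note that $s$ copies of $\sigma_i$ form a unital representation of $\mathcal{A}_i$ of dimension $s$. In case (1) the cofinality hypothesis yields $\sup_i d_i<k$; if $k>\aleph_0$, set $d=\max(\aleph_0,\sup_i d_i)<k$ and pad $\sigma_i$ by $d$ copies, while if $k=\aleph_0$ then $I$ is necessarily finite and the $d_i$ are finite, so we take $d$ to be any positive-integer common multiple of the $d_i$ and pad by $d/d_i$ copies of $\sigma_i$.

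Now choose $m\ge k+\sum_i Card(\mathcal{A}_i)$ and $Card(\Lambda)=m$, and write $H_m=\sum_{\lambda\in\Lambda}^{\oplus}X_\lambda$ with $\dim X_\lambda=d$. For each $i$, distribute a separating family of $d$-dimensional unital representations of $\mathcal{A}_i$ (of cardinality at most $Card(\mathcal{A}_i)$, obtained by padding) along $\Lambda$ with each representation appearing $m$ times; the result is $\pi^i=\sum^{\oplus}\pi^i_\lambda$, a faithful unital representation of $\mathcal{A}_i$ satisfying part (1) of Lemma \ref{decomp} (the multiplicity condition holds because $|A_i|\cdot m=m$). Let $\rho:\underset{i\in I}{\ast_{\mathbb{C}}}\mathcal{A}_i\to B(H_m)$ be a faithful unital representation, constructed as in the proof of $(4)\Rightarrow(3)$ of Theorem \ref{characterize} using irreducible unital representations of the (already unital) free product.

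Given $(\varepsilon,W,E)$ with $W\subseteq\mathcal{G}$ finite, partition $W=\bigsqcup_{j=1}^{n}W_{i_j}$ along the $\mathcal{A}_i$'s, apply Lemma \ref{*SOTapprox} to each $(\mathcal{A}_{i_j},\pi^{i_j},\rho|_{\mathcal{A}_{i_j}})$, and combine the resulting finite sets into one finite $G\subseteq\Lambda$ with projection $P=Q_G$ and unitaries $U_{i_j}\in\mathcal{M}_P$. Set $\tau_{i_j}(a)=U_{i_j}^*\pi^{i_j}(a)U_{i_j}$ for $j=1,\dots,n$ and $\tau_i(a)=P\pi^i(a)P$ for $i\notin\{i_1,\dots,i_n\}$; the first is unital because $U_{i_j}^*U_{i_j}=P$, and the second is a unital $*$-homomorphism because $P$ commutes with $\pi^i(\mathcal{A}_i)$ (it is the projection onto a sum of $\pi^i$-invariant subspaces) and $\pi^i$ is unital on $\mathcal{A}_i$. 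The universal property of the unital free product then produces a unital $\tau:\underset{i\in I}{\ast_{\mathbb{C}}}\mathcal{A}_i\to\mathcal{M}_P$ that is $\varepsilon$-close to $\rho$ on $W$ for each $e\in E$, and $rank(P)<k$ since $G$ is finite and $\dim X_\lambda=d<k$. Theorem \ref{characterize}(2) now yields $R_{<k}D$. The sole step sensitive to the case hypotheses is the selection of the common dimension $d<k$; this is exactly the obstruction exhibited by the counterexample $\underset{n\in\mathbb{N}}{\ast_{\mathbb{C}}}M_n(\mathbb{C})$, where $Card(I)=\aleph_0$ equals the cofinality of $\aleph_0$ and no finite common dimension exists.
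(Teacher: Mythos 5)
Your overall strategy is the right one and matches the paper's: reuse the machinery of Theorem \ref{freeproduct}, arranging a common decomposition of $H_{m}$ that reduces every $\pi^{i}$ so that the compressions $P\pi^{i}P$ are unital and the universal property of the unital free product applies. Your treatment of case (2) is essentially the paper's (with $s$ the largest cardinal below $k$, every summand can be padded to dimension exactly $s$), and your observation that the compression $P\pi^i(a)P$ is a unital $\ast$-homomorphism because $P$ is a sum of reducing subspaces is exactly the point that makes the unital category work.

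However, case (1) has a genuine gap: you insist on a \emph{single} common dimension $d<k$ for all summands, and such a $d$ need not exist. Your cardinal $d_i$ is the dimension of just \emph{one} unital representation of $\mathcal{A}_i$, so $\sup_i d_i<k$ tells you nothing about the dimensions occurring in a \emph{separating family} for $\mathcal{A}_i$. When $k$ is a limit cardinal, a single unital $R_{<k}D$ algebra may require separating representations of cofinally many dimensions below $k$ (for instance, with $k=\aleph_\omega$ and $\mathcal{A}$ built from the algebras $\mathcal{S}_{\aleph_n}$ of the paper's Remark, every representation that is nonzero on the $n$-th piece has dimension at least $\aleph_n$); then no separating family of $d$-dimensional representations exists for any $d<k$, and the step ``distribute a separating family of $d$-dimensional unital representations, obtained by padding'' cannot be carried out --- padding only increases dimension. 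Note this already breaks your argument when $Card(I)=1$, where the theorem is trivially true. The paper avoids this by \emph{not} equalizing dimensions globally: it indexes the summands of every $\pi^{i}$ by one set $\Lambda$ and applies the cofinality hypothesis \emph{blockwise}, obtaining for each fixed $\lambda$ a bound $s_{\lambda}=\sup_{i\in I}\dim\pi_{\lambda,i}<k$ (a supremum of only $Card(I)<\mathrm{cf}(k)$ cardinals), then padding within each block to get a common decomposition $H_{m}=\sum_{\lambda}^{\oplus}X_{\lambda}$ with $\dim X_{\lambda}=s_{\lambda}$; the $s_{\lambda}$ may tend to $k$, but each finite subfamily $G\subseteq\Lambda$ still gives $\mathrm{rank}\,Q_{G}<k$, which is all Lemma \ref{*SOTapprox} and Theorem \ref{characterize}(2) require. (A minor further quibble: your lcm device for $k=\aleph_0$ is applied to the wrong quantities for the same reason, though it does correctly address the small finite-cardinal arithmetic issue that the paper glosses over when it ``takes $s_\lambda$ copies.'')
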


\begin{proof}
$\left(  1\right)  .$ Choose $m\geq k+%
{\displaystyle\sum_{i\in I}}
Card\left(  \mathcal{A}_{i}\right)  $, and choose a set $\Lambda$ with
$Card\left(  \Lambda\right)  =m$. Using Lemma \ref{decomposition} we can, for
each $i\in I$, find a faithful representation $\pi^{i}=%
{\displaystyle\sum_{\lambda\in\Lambda}}
\pi_{\lambda,i}$ so that $\dim\pi^{i}=m$ and, for every $i\in I$ and
$\lambda\in\Lambda$, we have $\dim\pi_{\lambda,i}<k$. Since $Card\left(
I\right)  $ is less than the cofinality of $k$, we have, for each $\lambda
\in\Lambda$, a cardinal $s_{\lambda}<k$ such that $\sup_{i\in I}\dim
\pi_{\lambda,i}\leq s_{\lambda}$. If we replace each $\pi_{\lambda,i}$ with a
direct sum of $s_{\lambda}$ copies of itself, we get a new decomposition which
we will denote by the same names such that, for each $i$ and each $\lambda$ we
have $\dim\pi_{\lambda,i}=s_{\lambda}$. Hence we may write direct sum
decompositions of the $\pi^{i}$'s with respect to a common decomposition
$H_{m}=%
{\displaystyle\sum_{\lambda\in\Lambda}}
X_{\lambda}$ where $\dim X_{\lambda}=s_{\lambda}$ for every $\lambda\in
\Lambda$. The rest now follows as in the proof of Theorem \ref{freeproduct}.

$\left(  2\right)  $ If $k$ is not a limit cardinal, there is a largest
cardinal $s<k$. Repeat the proof of part $\left(  1\right)  $ with
$s_{\lambda}=s$ for every $\lambda\in\Lambda$.
\end{proof}

\bigskip

\begin{remark}
We cannot remove the condition on $Card\left(  I\right)  $ in part $\left(
1\right)  $ of Theorem \ref{unitalfreeproduct}. Suppose $k$ is a limit
cardinal and $I$ is a set of cardinals less than $k$ whose cardinality equals
the cofinality of $k$ and such that $\sup\left(  I\right)  =k$. For each
infinite cardinal $m,$ choose a set $\Lambda_{m}$ with cardinality $m$, and
let $\mathcal{S}_{m}$ denote the universal unital C*-algebra generated by
$\left\{  v_{\lambda}:\lambda\in\Lambda_{m}\right\}  $ with the conditions

\begin{enumerate}
\item $v_{\lambda}^{\ast}v_{\lambda}=1$ for every $\lambda\in\Lambda_{m},$

\item $v_{\lambda_{1}}v_{\lambda_{1}}^{\ast}v_{\lambda_{2}}v_{\lambda_{2}%
}^{\ast}=0$ for $\lambda_{1}\neq\lambda_{2}$ in $\Lambda_{m}$.\bigskip
\end{enumerate}

Since $\mathcal{S}_{m}$ is $m$-generated, it follows that every irreducible
representation of $\mathcal{S}_{m}$ is at most $m$-dimensional (see the proof
of $\left(  4\right)  \Longrightarrow\left(  3\right)  $ in Theorem
\ref{characterize}). Hence $\mathcal{S}_{m}$ is separated by $m$-dimensional
representations. On the other hand, if $\pi$ is a unital representation of
$\mathcal{S}_{m}$, then $\left\{  \pi\left(  v_{\lambda;}v_{\lambda}^{\ast
}\right)  :\lambda\in\Lambda_{m}\right\}  $ is an orthogonal family of nonzero
projections, which implies that the dimension of $\pi$ is at least $m.$It
follows that each $\mathcal{S}_{s}$ is $R_{<k}D$ for $s\in I.$ However, any
unital representation $\pi$ of the free product $\underset{s\in I}%
{\ast_{\mathbb{C}}}\mathcal{S}_{s}$ must induce a unital representation of
each $\mathcal{S}_{s}$, so its dimension is at least $\sup I=k.$ Hence
$\underset{s\in I}{\ast_{\mathbb{C}}}\mathcal{S}_{s}$ is not $R_{<k}D$.
\end{remark}

\section{Separable RFD Algebras}

In this section we show that for a separable C*-algebra being RFD is
equivalent to a lifting property.

Suppose $\left\{  e_{1},e_{2},\ldots\right\}  $ is an orthonormal basis for a
Hilbert space $\ell^{2}$, and, for each integer $n\geq1,$ let $P_{n}$ be the
projection onto $sp\left(  \left\{  e_{1},\ldots,e_{n}\right\}  \right)  $.
Let $\mathcal{M}_{n}=P_{n}B\left(  \ell^{2}\right)  P_{n}$ for $n\geq1$, and,
following Lemma \ref{simple}, let
\[
\mathcal{B=}\left\{  \left\{  T_{n}\right\}  \in%
{\displaystyle\prod_{n=1}^{\infty}}
\mathcal{M}_{n}:\exists T\in B\left(  \ell^{2}\right)  \text{ with }%
T_{n}\rightarrow T\text{ ( }\ast\text{-SOT )}\right\}  ,
\]
and let
\[
\mathcal{J=}\left\{  \left\{  T_{n}\right\}  \in\mathcal{B}:T_{n}%
\rightarrow0\text{ ( }\ast\text{-SOT )}\right\}  .
\]
Then, by Lemma \ref{simple}, we have that $\mathcal{B}$ is a unital
C*-algebra, $\mathcal{J}$ is a closed ideal in $\mathcal{B}$ and
\[
\pi\left(  \left\{  T_{n}\right\}  \right)  =\text{(}\ast\text{-SOT)-}%
\lim_{n\rightarrow\infty}T_{n}%
\]
defines a unital surjective $\ast$-homomorphism from $\mathcal{B}$ to
$B\left(  H\right)  $ whose kernel is $\mathcal{J}$. We can now give our
characterization of RFD for separable C*-algebras.

\begin{theorem}
\label{sep}Suppose $\mathcal{A}$ is a separable C*-algebra. The following are equivalent.

\begin{enumerate}
\item $\mathcal{A}$ is RFD

\item For every unital $\ast$-homomorphism $\rho:\mathcal{A}^{+}\rightarrow
B\left(  \ell^{2}\right)  $ there is a unital $\ast$-homomorphism
$\tau:\mathcal{A}^{+}\rightarrow\mathcal{B}$ such that $\pi\circ\tau=\rho$.
\end{enumerate}
\end{theorem}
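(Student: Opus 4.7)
My plan is to handle the two directions separately, reducing $(1)\Rightarrow(2)$ to Lemmas~\ref{decomp} and~\ref{*SOTapprox} via a careful alignment with the fixed basis of $\ell^2$.

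For $(2)\Rightarrow(1)$: since $\mathcal{A}^+$ is separable it admits a faithful unital representation $\rho:\mathcal{A}^+\to B(\ell^2)$. Apply (2) to produce a lift $\tau=\{\tau_n\}:\mathcal{A}^+\to\mathcal{B}$; each coordinate $\tau_n:\mathcal{A}^+\to\mathcal{M}_n$ is a finite-dimensional $\ast$-homomorphism. The $\ast$-SOT convergence $\tau_n(a)\to\rho(a)$ forces $\|a\|=\|\rho(a)\|\le\sup_n\|\tau_n(a)\|$, so $\bigoplus_n\tau_n$ separates the points of $\mathcal{A}^+$ and $\mathcal{A}$ is RFD.

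For $(1)\Rightarrow(2)$, fix a unital $\rho:\mathcal{A}^+\to B(\ell^2)$. I first build approximating $\ast$-homomorphisms living in the specific corners $\mathcal{M}_{P_{d_N}}$, and then interpolate. Apply Lemma~\ref{decomp} with $m=k=\aleph_0$ to obtain a faithful unital $\pi=\bigoplus_{j\ge 1}\pi_{\lambda_j}$ on $H_{\aleph_0}$ with each $X_{\lambda_j}$ finite-dimensional. Using the freedom in the identification $H_{\aleph_0}\cong\ell^2$, arrange that the concatenation of chosen orthonormal bases of $X_{\lambda_1},X_{\lambda_2},\ldots$ is the prescribed basis $(e_i)$; then with $d_N:=\sum_{j\le N}\dim X_{\lambda_j}$, the partial-sum projection $Q_{G_N}$ of Lemma~\ref{*SOTapprox} is exactly $P_{d_N}$. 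Picking a dense sequence $(a_i)\subset\mathcal{A}$, apply Lemma~\ref{*SOTapprox} diagonally with $W_k=\{a_i,a_i^{\ast}:i\le k\}$, $E_k=\{e_1,\ldots,e_k\}$, and $\varepsilon=1/k$ to extract a strictly increasing sequence $(N_k)$ together with unital $\sigma_k:\mathcal{A}^+\to\mathcal{M}_{P_{d_{N_k}}}$. Uniform boundedness of the $\sigma_k$, norm density of $(a_i)$ in $\mathcal{A}$, and the fact that $\sigma_k(1)=P_{d_{N_k}}\to 1$, combine to give $\sigma_k(a)\to\rho(a)$ ($\ast$-SOT) for every $a\in\mathcal{A}^+$.

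To fill in the remaining indices, for $d_{N_k}\le n<d_{N_{k+1}}$ set
\[
\tau_n(a)=\sigma_k(a)+\varepsilon_0(a)\bigl(P_n-P_{d_{N_k}}\bigr),
\]
where $k=k(n)$ is determined by that inequality and $\varepsilon_0:\mathcal{A}^+\to\mathbb{C}$ is the canonical character vanishing on $\mathcal{A}$. Orthogonality of $P_{d_{N_k}}$ and $P_n-P_{d_{N_k}}$ makes each $\tau_n$ a unital $\ast$-homomorphism into $\mathcal{M}_n$, and the splitting
\[
\tau_n(a)e-\rho(a)e=(\sigma_k(a)-\rho(a))e+\varepsilon_0(a)(P_n-P_{d_{N_k}})e
\]
tends to zero because $k(n)\to\infty$ with $n$, and both $P_n,P_{d_{N_k}}$ converge $\ast$-strongly to $1$; the same argument applied to $a^{\ast}$ upgrades this to $\ast$-SOT convergence, so $\tau=\{\tau_n\}$ is a unital $\ast$-homomorphism $\mathcal{A}^+\to\mathcal{B}$ with $\pi\circ\tau=\rho$. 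The principal obstacle is precisely this alignment problem: Theorem~\ref{characterize}(4) supplies approximations only on arbitrary finite-rank corners, whereas $\mathcal{B}$ is built from the prescribed sequence $(P_n)$. The basis-compatible choice of decomposition resolves indices of the form $d_{N_k}$, and the character-padding trick handles the rest.
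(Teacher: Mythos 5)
Your proof is correct and follows essentially the same route as the paper: finite-dimensional approximants of $\rho$ converging $\ast$-SOT in the corners $P_{d_{N_k}}B(\ell^2)P_{d_{N_k}}$, followed by padding with the character $\varepsilon_0$ (the paper's $\beta$) on $P_n-P_{d_{N_k}}$ to fill in the intermediate indices. Your only real addition is the explicit basis-alignment of the decomposition from Lemma \ref{decomp} so that the projections of Lemma \ref{*SOTapprox} are literally the prescribed $P_n$'s --- a point the paper's direct appeal to Theorem \ref{characterize} leaves implicit.
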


\begin{proof}
The implication $\left(  2\right)  \Longrightarrow\left(  1\right)  $ is clear.

$\left(  1\right)  \Longrightarrow\left(  2\right)  $. Suppose $\mathcal{A=C}%
^{\ast}\left(  \left\{  a_{1},a_{2},\ldots\right\}  \right)  $ is $RFD$ and
$\rho:\mathcal{A}^{+}\rightarrow B\left(  \ell^{2}\right)  $ is a unital
$\ast$-homomorphism. It follows from Theorem \ref{characterize} that there is
an increasing sequence $\left\{  n_{k}\right\}  $ of positive integers and
unital $\ast$-homomorphisms $\tau_{k}:\mathcal{A\rightarrow M}_{n_{k}}$ such
that%
\[
\left\Vert \left[  \tau_{k}\left(  a_{j}\right)  -\rho\left(  a_{j}\right)
\right]  e_{i}\right\Vert <1/k
\]
for $1\leq i,j\leq k.$ It follows that $\tau_{n_{k}}\left(  a\right)
\rightarrow\rho\left(  a\right)  $ ( $\ast$-SOT ) for every $a\in
\mathcal{A}^{+}$. If $n_{k}<n<n_{k+1}$ we define $\tau_{n}:\mathcal{A}%
^{+}\mathcal{\rightarrow M}_{n}$ by%
\[
\tau_{n}\left(  a\right)  =\left(
\begin{array}
[c]{cccc}%
\tau_{n_{k}}\left(  a\right)  &  &  & \\
& \beta\left(  a\right)  &  & \\
&  & \ddots & \\
&  &  & \beta\left(  a\right)
\end{array}
\right)  ,
\]
where $\beta:\mathcal{A}^{+}\rightarrow\mathbb{C}$ is the unique $\ast
$-homomorphism with $\ker\beta=\mathcal{A}$, relative to the decomposition
\[
P_{n}\left(  \ell^{2}\right)  =P_{n_{k}}\left(  \ell^{2}\right)
\oplus\mathbb{C}e_{1+n_{k}}\oplus\cdots\oplus\mathbb{C}e_{-1+n_{k+1}}.
\]
It is easily seen that $\tau_{n}\left(  a\right)  \rightarrow\rho\left(
a\right)  $ ( $\ast$-SOT ) for every $a\in\mathcal{A}^{+}$. If we define
$\tau:\mathcal{A}\rightarrow\mathcal{B}$ by
\[
\tau\left(  a\right)  =\left\{  \tau_{n}\left(  a\right)  \right\}  ,
\]
we see that $\pi\circ\tau=\rho$.
\end{proof}

\begin{acknowledgement}
The author wishes to thank Tatiana Shulman and Terry Loring for bringing the
question answered by Theorem \ref{sep} to his attention. Thinking about this
question led to the discovery of all the results in this paper.
\end{acknowledgement}

\end{document}